\def\soejia{\alpha}
\def\soejib{\beta}
\def\soejic{\gamma}
\def\soejid{\delta}
\newtheorem{Theorem}{Theorem}[section]
\newtheorem{Lemma}[Theorem]{Lemma}
\newtheorem{Example}[Theorem]{Example}
\def\tb{{\bold t}}
\def\0b{{\bold 0}}
\def\Ac{{\cal A}}
\def\tb{{\bold t}}
\def\0b{{\bold 0}}
\def\Ac{{\cal A}}
\newcommand{\totalitems}{\tau}
\def\AAA{{{
A}_{\totalitems, {\bf b}, {\bf c}, {\bf r}, {\bf s}}}}
\def\Ac{{
A}}     
\def\Ac{{
A}}
\bmdefine{\Bzero}{0}
\bmdefine{\Bone}{1}
\def\Bone{{\bf 1}}
\def\tb{{\bf t}}
\def\RR{{\mathbb R}}
\def\ZZ{{\mathbb Z}}
\title{Gr\"obner bases of nested configurations}
\author{Satoshi Aoki, Takayuki Hibi, Hidefumi Ohsugi and Akimichi Takemura}
\date{}
\begin{document}
\maketitle

\begin{abstract}
In this paper we introduce a new and large family of configurations whose
toric ideals possess quadratic Gr\"obner bases. As an application, a
generalization of algebras of Segre--Veronese type will be studied.
\end{abstract}

\section{Introduction}

Let $K[\tb]=K[t_1, \ldots, t_d]$ be the polynomial ring 
over a field $K$.
A finite set $A$ of monomials of $K[\tb]$
is called a {\em configuration} of $K[\tb]$
if there exists a nonnegative vector $(w_1,\ldots,w_d) \in \RR_{\geq 0}^d$ such that
$\sum_{i=1}^d w_i a_i = 1$ for all $t_1^{a_1} \cdots t_d^{a_d} \in A$.
Let $A$ be a configuration of $K[\tb]$.
We associate $A$
with the homogeneous semigroup ring
$K[A]$ which is
the subalgebra of $K[\tb]$ generated by
the monomials of $\Ac$.
Let $K[X] = K[\{x_{M} \, | \, M \in \Ac\}]$ denote
the polynomial ring over $K$ in the variables 
$x_{M}$ with $M \in \Ac$, where each $\deg(x_M) = 1$.
The {\em toric ideal} $I_\Ac$ of $\Ac$ 
is the kernel of the surjective homomorphism 
$\pi \, : \, K[X] \to K[A]$
defined by setting
$\pi(x_{M}) = M  $ for all $M \in \Ac$.
It is known that the toric ideal
$I_\Ac$ is generated by the binomials $u - v$, where
$u$ and $v$ are monomials of $K[X]$, with
$\pi(u) = \pi(v)$.
Moreover, since $A$ is a configuration,
$I_A$ is generated by homogeneous binomials.
See, e.g., \cite[Section 4]{Stu}.

A fundamental question in commutative algebra
is to determine whether $K[A]$ is {\em Koszul}.
A Gr\"obner basis ${\cal G}$ is called a {\it quadratic Gr\"obner basis}
if ${\cal G}$ consists of quadratic homogeneous polynomials.
Even though it is difficult to prove that
$K[A]$ is Koszul, the hierarchy 
(i) $\Longrightarrow$ (ii) $\Longrightarrow$ (iii)
is known
among the following properties:  
\begin{enumerate}
\item[(i)]
$I_\Ac$ possesses a quadratic Gr\"obner basis.
\item[(ii)]
$K[A]$ is Koszul;
\item[(iii)]
$I_\Ac$ is generated by quadratic binomials.
\end{enumerate}
However both (ii) $\Longrightarrow$ (i)
and (iii) $\Longrightarrow$ (ii) are false in general.
One can find counterexamples for them in \cite[Examples 2.1 and 2.2]{OH2}.

Let $A$ be a 
configuration of
a polynomial ring
$K[{\bf t}] = K[t_1,\ldots,t_d]$ with
$d$ variables.
For each $i = 1,2,\ldots,d$, let 
$B_i=\{m_1^{(i)},\ldots,m_{\lambda_i}^{(i)}\}$ be 
a configuration of a polynomial ring 
$K[{\bf u}^{(i)}] = K[ u_1^{(i)},\ldots,u_{\mu_i}^{(i)}]$
with $\mu_i$ variables.
The {\it nested configuration} arising from $A$ and $B_1,...,B_d$
is
the configuration
$$
A(B_1,\ldots,B_d) := 
\left\{ \left.
{m}_{j_1}^{(i_1)} \cdots {m}_{j_r}^{(i_r)} 
\ \right| \ 1 \leq r \in  \ZZ , \ 
t_{i_1} \cdots t_{i_r} \in A, \ \ 
1 \leq j_k \leq \lambda_{i_k} 
\right\}
$$
of a polynomial ring
$K[{\bf u}^{(1)},\ldots,{\bf u}^{(d)} ]$.
The main result of the present paper is as follows:

\bigskip

\noindent
{\bf Theorem \ref{main1}.}
{\it
Work with the same notation as above.
If the toric ideals $I_A$, $I_{B_1}, \ldots , I_{B_d}$ possess quadratic
Gr\"obner bases,
then
the toric ideal $I_{A(B_1,\ldots,B_d) }$
possesses a quadratic Gr\"obner basis.
}

\bigskip

In addition, in Section 4, we study a quadratic Gr\"obner basis of the toric ideal of
$A(B_1,\ldots,B_d)$ where $A$ comes from a Segre--Veronese configuration.

\section{Motivation from statistics}

In this section we present a statistical problem, which motivates
a generalization of Segre-Veronese configuration considered in our
previous paper [AHOT].

We consider nested selection of groups and items from the
groups.  For example, consider an examination on mathematics which
consists of $J$ groups of problems (e.g.\ algebra, geometry and
statistics) and each group $j$ consists of $m_j$ individual
problems.   For simplicity let $J=3$ and $m_j\equiv 3$.
Label the nine individual problems as  A1, A2, A3, G1, G2,
G3 and S1, S2, S3.
Suppose that each examinee is asked to choose two groups of problems
and then $c_j=2$ problems from each chosen group $j$.   Then there are
27 patterns of selections of four problems
as (A1,A2,G1,G2), (A1,A3,G1,G2),
(A2,A3,G1,G2), \dots,(G2,G3,S2,S3).  Now as a simple statistical model
suppose that each problem is chosen according to its own
attractiveness, independent of the choices of other problems within
the same group as well as the choice of other group.  Let
$q_{\rm A1},q_{\rm A2}, \dots, q_{\rm S3}$ denote the attractiveness of each
problem. Then the probabilities of the selections are expressed as
\begin{align*}
&{\rm Prob}({\rm A1,A2,G1,G2})= c\, q_{\rm A1}q_{\rm A2}q_{\rm G1}q_{\rm
  G2}, \\
& \qquad\qquad \dots\\
&{\rm Prob}({\rm G2,G3,S2,S3})= c\, q_{\rm G2} q_{\rm G3}q_{\rm S2}q_{\rm
  S3},
\end{align*}
where $c$ is the normalizing constant so that the 27 probabilities
sum to one.  Now associate a configuration $A$ to the semigroup ring
\[
K(q_{\rm A1}q_{\rm A2}q_{\rm G1}q_{\rm   G1}, \dots, q_{\rm G2} q_{\rm G3}q_{\rm S2}q_{\rm
  S31}), 
\]
which is a subring of the polynomial ring 
$K(q_{\rm A1},q_{\rm A2}, \dots, q_{\rm S3})$ in  nine variables.
A system of generators of the toric ideal for $I_A$, such as the
reduced Gr\"obner basis, is required for statistical test of this model.
This example corresponds to $A(B_1,B_2,B_3)$ where $A=\{t_1t_2, t_1t_3,
t_2t_3\}$ and $B_1$, $B_2$, $B_3$ are copies of $A$ with different variables.

Enumeration of different selections becomes somewhat more complicated
if the same item can be chosen more than once (``sampling with
replacement'').  Suppose that a customer is given two (identical) coupons,
which allow the customer to go to one of several shops and buy
two items at a discount at the  shop. 
Buying the same item twice is allowed.  For
simplicity suppose that there are only two shops A,B and they sell
only two different items \{A1, A2\} and \{B1, B2\}, respectively.  A
person may buy A1 four times, by going to the shop A twice and buying
A1 twice each time.  Or a person may buy each of A1, A2, B1, B2 once.
Note that in this scheme it is not possible to buy three items from
shop A and 1 item from shop B.  Again we can think of a statistical
model that the relative popularity of selections is explained entirely
by the attractiveness of each item.  This corresponds to Example \ref{nestexample}
below, where ${\rm A1}=u_1^{(1)}$, ${\rm A2}=u_2^{(1)}$, 
${\rm B1}=u_1^{(2)}$, ${\rm B2}=u_2^{(2)}$.

Note that in the above examples we can also consider recursive nesting of subgroups.

\section{Nested configurations}

In this section, we introduce
an effective method to construct
semigroup rings whose toric ideals have quadratic Gr\"obner bases.

Let $A$ be a 
configuration of
a polynomial ring
$K[{\bf t}] = K[t_1,\ldots,t_d]$ with
$d$ variables.
For each $i = 1,2,\ldots,d$, let 
$B_i=\{m_1^{(i)},\ldots,m_{\lambda_i}^{(i)}\}$ be 
a configuration of a polynomial ring 
$K[{\bf u}^{(i)}] = K[ u_1^{(i)},\ldots,u_{\mu_i}^{(i)}]$
with $\mu_i$ variables.
The {\it nested configuration} arising from $A$ and $B_1,...,B_d$
is
the configuration
$$
A(B_1,\ldots,B_d) := 
\left\{ \left.
{m}_{j_1}^{(i_1)} \cdots {m}_{j_r}^{(i_r)} 
\ \right| \ 1 \leq r \in  \ZZ , \ 
t_{i_1} \cdots t_{i_r} \in A, \ \ 
1 \leq j_k \leq \lambda_{i_k} 
\right\}
$$
of a polynomial ring
$K[{\bf u}^{(1)},\ldots,{\bf u}^{(d)} ]$.

\begin{Example}
{\em
If $B_i = \left\{m_1^{(i)} \right\}$ for all $1 \leq i \leq d$,
then we have $K[A(B_1,\ldots,B_d)] \simeq K[A]$.
}
\end{Example}

\begin{Example}
{\em
Let $A=
\{
t_1^r
\}
$
and
let
$B_1= \{u_1,\ldots, u_\lambda\}$ be the set of variables.
Then $K[A(B_1)]$ is $r$th Veronese subring of 
the polynomial ring $K[B_1]= K[u_1,\ldots,u_\lambda]$.
}
\end{Example}

\begin{Example}
{\em
Let $A=
\{
t_1 t_2
\}
$
and
let
$B_1= \{u_1^{(1)},\ldots, u_{\lambda_1}^{(1)}\}$
and
$B_2= \{u_1^{(2)},\ldots, u_{\lambda_2}^{(2)}\}$
be the sets of variables.
Then $K[A(B_1,B_2)]$ is the Segre product of 
the polynomial rings
$K[B_1]= K[u_1^{(1)},\ldots, u_{\lambda_1}^{(1)}]$
and
$K[B_2]= K[u_1^{(2)},\ldots, u_{\lambda_2}^{(2)}]$.
}
\end{Example}

Let $\eta$ be the cardinality of 
$ A(B_1,\ldots, B_d)$ and set
$A(B_1,\ldots, B_d)= \{M_1,\ldots,M_\eta \}$.
Let
\begin{eqnarray*}
K[{\bf x}]&=&K[x_{M_1} ,\ldots,x_{M_\eta }]\\
K[{\bf y}] &=& K[\{ y_{i_1 \cdots i_r}\}_{1 \leq r \in  \ZZ , \ i_1 \leq \cdots \leq i_r, \ t_{i_1} \cdots t_{i_r}\in A} ]\\
K\left[{\bf z}^{(i)}\right] &=& K\left[z_1^{(i)},\ldots , z_{\lambda_i}^{(i)}\right] \ \ \ \ \ \ {(i= 1,2,\ldots,d)}
\end{eqnarray*}
be polynomial rings.
The toric ideal $I_{A}$ is the kernel of the
homomorphism
$\pi_0 : K[{\bf y}] \longrightarrow K[{\bf t}]$
defined by setting $\pi_0  (y_{i_1 \cdots i_r})=t_{i_1} \cdots t_{i_r}.$
The toric ideal $I_{B_i}$ is the kernel of the
homomorphism
$\pi_{i} : K[{\bf z}^{(i)}] \longrightarrow K[{\bf u}^{(i)}]$
defined by setting $\pi_{i} (z_j^{(i)}) =m_j^{(i)}.$
The toric ideal $I_{A(B_1,\ldots,B_d) }$ is the kernel of the
homomorphism
$\pi : K[{\bf x}] \longrightarrow K[{\bf u}^{(1)},\ldots,{\bf u}^{(d)} ]$
defined by setting
$\pi\left(x_{M}\right)=M.$

Let ${\cal G}_i$ be a Gr\"obner basis of $I_{B_i}$ with respect to 
a monomial order $<_i$ for $1 \leq i \leq d$.
For each $M \in A(B_1,\ldots,B_d)$,
the expression $M = {m}_{j_1}^{(i_1)} \cdots {m}_{j_r}^{(i_r)}$ is called
{\it standard}
if
$$\prod_{i_k = j , \ \  1 \leq k \leq r} z_{j_k}^{(i_k)} $$
is a standard monomial with respect to ${\cal G}_j$
for all $1 \leq j \leq d$.

\begin{Example}
\label{nestexample}
{\em
Let $A=\{ t_1^2,t_1 t_2, t_2^2\}$,
\begin{eqnarray*}
B_1&=& \left\{ m_1^{(1)} = \left(u_1^{(1)}\right)^2, \ m_2^{(1)} = u_1^{(1)}  u_2^{(1)},  \ 
m_3^{(1)} = \left(u_2^{(1)}\right)^2 \right\},\\
B_2&=& \left\{ m_1^{(2)} = \left(u_1^{(2)}\right)^2, \ m_2^{(2)} =  u_1^{(2)}  u_2^{(2)}, \ 
m_3^{(2)} =  \left(u_2^{(2)}\right)^2 \right\}.
\end{eqnarray*}
Then 
$
A(B_1,B_2)
$
consists of the monomials
$$
 \left(u_1^{(1)}\right)^4, \left(u_1^{(1)}\right)^3  u_2^{(1)} , \left(u_1^{(1)}\right)^2  \left(u_2^{(1)}\right)^2 ,
 u_1^{(1)}  \left(u_2^{(1)}\right)^3 , \left(u_2^{(1)}\right)^4,
$$
$$
\left(u_1^{(1)}\right)^2 \left(u_1^{(2)}\right)^2, \left(u_1^{(1)}\right)^2 u_1^{(2)}  u_2^{(2)}, \left(u_1^{(1)}\right)^2 \left(u_2^{(2)}\right)^2
$$
$$
 u_1^{(1)}  u_2^{(1)} \left(u_1^{(2)}\right)^2,  u_1^{(1)}  u_2^{(1)} u_1^{(2)}  u_2^{(2)},  u_1^{(1)}  u_2^{(1)} \left(u_2^{(2)}\right)^2
$$
$$
 \left(u_2^{(1)}\right)^2 \left(u_1^{(2)}\right)^2,  \left(u_2^{(1)}\right)^2 u_1^{(2)}  u_2^{(2)},   \left(u_2^{(1)}\right)^2
\left(u_2^{(2)}\right)^2
$$
$$
 \left(u_1^{(2)}\right)^4, \left(u_1^{(2)}\right)^3  u_2^{(2)} , \left(u_1^{(2)}\right)^2  \left(u_2^{(2)}\right)^2 ,
 u_1^{(2)}  \left(u_2^{(2)}\right)^3 , \left(u_2^{(2)}\right)^4
$$
and, with respect to any monomial order,
\begin{eqnarray*}
{\cal G}_0 &=& \{ y_{1 1} y_{2 2} - y_{1 2}^2\},\\
{\cal G}_1 &=& \{ z_1^{(1)} z_3^{(1)} -(z_2^{(1)})^2\},\\
{\cal G}_2 &=& \{ z_1^{(2)} z_3^{(2)} -(z_2^{(2)})^2\}
\end{eqnarray*}
are Gr\"obner bases of
\begin{eqnarray*}
I_A &=& \left< y_{1 1} y_{2 2} - y_{1 2}^2 \right>,\\
I_{B_1} &=& \left< z_1^{(1)} z_3^{(1)} -(z_2^{(1)})^2 \right>,\\
I_{B_2} &=& \left< z_1^{(2)} z_3^{(2)} -(z_2^{(2)})^2 \right>,
\end{eqnarray*}
respectively.
Let $>_0$ be a lexicographic order induced by $y_{1 1} >_0 y_{1 2} >_0 y_{2 2}$
and let $>_i$ a lexicographic order induced by
$ z_1^{(i)} >_i z_2^{(i)} >_i z_3^{(i)}$ for $i = 1,2$.
For example, $$M = \left(u_1^{(1)}\right)^2  \left(u_2^{(1)}\right)^2  \in A(B_1,B_2)$$
has two expressions, that is,
$M =  m_1^{(1)} m_3^{(1)}$ and $M =  \left(m_2^{(1)} \right)^2$.
Since $z_1^{(1)} z_3^{(1)}$ is not standard and $(z_2^{(1)})^2$
is standard with respect to ${\cal G}_1$,
$M =  m_1^{(1)} m_3^{(1)}$ is not a standard expression and
$M =  \left(m_2^{(1)} \right)^2$ is a standard expression.
}
\end{Example}

\bigskip

In order to study the relation among $I_{A}$, $I_{B_i}$ and $I_{A(B_1,\ldots,B_d) }$,
we define homomorphisms
\begin{eqnarray*}
\varphi_0 : K[{\bf x}] \longrightarrow K[{\bf y}] &,& 
\varphi_0 \left(x_{{m}_{j_1}^{(i_1)} \cdots {m}_{j_r}^{(i_r)}}\right) = y_{i_1 \cdots i_r}, \\
\varphi_j : K[{\bf x}] \longrightarrow K[{\bf z}^{(j)}] &,&  
\varphi_j \left(x_{{m}_{j_1}^{(i_1)} \cdots {m}_{j_r}^{(i_r)}}\right) = 
\prod_{i_k = j , \ \  1 \leq k \leq r} z_{j_k}^{(i_k)},
\end{eqnarray*}
where ${m}_{j_1}^{(i_1)} \cdots {m}_{j_r}^{(i_r)}$ is the standard expression
defined above.
For example,
$$\varphi_1 \left( x_{\left(u_1^{(1)}\right)^2  \left(u_2^{(1)}\right)^2 } \right)$$
is not $z_1^{(1)} z_3^{(1)}$  but $(z_2^{(1)})^2$
in Example \ref{nestexample}.
Throughout this paper,
we order the monomials of
$A(B_1,\ldots, B_d)$ as
$A(B_1,\ldots, B_d) = \{M_1,\ldots,M_\eta \}$
where
$$
\ \ \ \ \ \ \ \ \ \ \ \ 
\ \ \ \ \ \ \ \ \ \ \ \ 
\pi_0 \circ \varphi_0(x_{M_1}) \geq_{lex} \cdots \geq_{lex} \pi_0 \circ  \varphi_0(x_{M_\eta}) 
\ \ \ \ \ \ \ \ \ \ \ \ \ \ \ \ \ \ \ \ \ \ \ \ (*)$$
with respect to the lexicographic order $<_{lex}$ induced by $t_1 > \cdots > t_d$.

\begin{Lemma}
\label{keylemma}
Let $f$ be a binomial in $K[{\bf x}]$.
Then the following conditions are equivalent:
\begin{enumerate}
\item[{\em (i)}]
$f \in I_{A(B_1,\ldots,B_d) }$;
\item[{\em (ii)}]
$\varphi_i(f) \in I_{B_i}$ for all $1 \leq i \leq d$.
\end{enumerate}
Moreover, if the above conditions hold, then we have
$\varphi_0 (f) \in I_A$.
\end{Lemma}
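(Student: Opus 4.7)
The key organising observation is that the map $\pi$ factors through the $\varphi_j$'s variable-block by variable-block. For $M \in A(B_1,\ldots,B_d)$ with standard expression $M = m^{(i_1)}_{j_1}\cdots m^{(i_r)}_{j_r}$, grouping the factors by the value of $i_k$ gives
\[
\pi(x_M) \;=\; M \;=\; \prod_{j=1}^d \Bigl(\prod_{i_k=j} m^{(j)}_{j_k}\Bigr) \;=\; \prod_{j=1}^d \pi_j\bigl(\varphi_j(x_M)\bigr),
\]
and since the polynomial subrings $K[{\bf u}^{(1)}],\ldots,K[{\bf u}^{(d)}]$ live in pairwise disjoint variable sets inside $K[{\bf u}^{(1)},\ldots,{\bf u}^{(d)}]$, this factorisation is the decomposition of the monomial $\pi(x_M)$ into its variable-block parts.

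Next I would pass to a binomial $f = {\bf x}^\alpha - {\bf x}^\beta$. Because each $\varphi_j$ sends variables to monomials, $\varphi_j(f) = \varphi_j({\bf x}^\alpha)-\varphi_j({\bf x}^\beta)$ is itself a binomial (possibly zero). For the equivalence (i)$\Leftrightarrow$(ii), I would apply the block factorisation above multiplicatively: $\pi({\bf x}^\alpha)=\pi({\bf x}^\beta)$ as monomials in $K[{\bf u}^{(1)},\ldots,{\bf u}^{(d)}]$ is equivalent to the equality of their ${\bf u}^{(j)}$-parts for every $j$, which by the factorisation is precisely $\pi_j(\varphi_j({\bf x}^\alpha))=\pi_j(\varphi_j({\bf x}^\beta))$, i.e.\ $\varphi_j(f)\in \ker\pi_j = I_{B_j}$. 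This handles both implications at once.

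For the concluding assertion on $\varphi_0(f) \in I_A$, I would use the configuration weight vectors. Each $B_j$ carries a weight ${\bf w}^{(j)}\in\RR_{\ge 0}^{\mu_j}$ with ${\bf w}^{(j)}$-weight $1$ on every $m^{(j)}_k$, so for $M$ as above the ${\bf w}^{(j)}$-weighted degree of $\pi_j(\varphi_j(x_M))$ is exactly $\#\{k : i_k=j\}$, and this is also the exponent of $t_j$ in $\pi_0(\varphi_0(x_M)) = t_{i_1}\cdots t_{i_r}$. Given (i), reading off ${\bf w}^{(j)}$-weighted degrees on both sides of $\pi({\bf x}^\alpha)=\pi({\bf x}^\beta)$ therefore equates the $t_j$-exponents of $\pi_0\varphi_0({\bf x}^\alpha)$ and $\pi_0\varphi_0({\bf x}^\beta)$ for every $j$, yielding $\pi_0(\varphi_0(f))=0$ and hence $\varphi_0(f)\in I_A$.

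The only delicate point — not really an obstacle but worth checking first — is that $\varphi_j$ is well defined: a monomial $M$ can admit several factorisations of the form $m^{(i_1)}_{j_1}\cdots m^{(i_r)}_{j_r}$, but after fixing the ${\bf u}^{(j)}$-component of $M$ the monomial $\prod_{i_k=j} z^{(i_k)}_{j_k}\in K[{\bf z}^{(j)}]$ has a unique $\mathcal{G}_j$-standard representative, and the weighted-degree count shows that the underlying multiset $\{i_1,\ldots,i_r\}$ is already determined by $M$, so $\varphi_0$ is unambiguous as well. Once these are in place the proof is just unique factorisation and weighted-degree bookkeeping.
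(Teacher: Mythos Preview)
Your proof is correct and follows essentially the same route as the paper: the equivalence (i)$\Leftrightarrow$(ii) via the block factorisation $\pi(x_M)=\prod_j \pi_j(\varphi_j(x_M))$ and disjointness of the variable sets is exactly what the paper does. For the ``Moreover'' clause the paper phrases the same count slightly differently---it reads off the $t_j$-exponent as the ordinary $z$-degree $\deg(\varphi_j(x_M))$ and invokes homogeneity of $I_{B_j}$ rather than your ${\bf w}^{(j)}$-weighted degree on the $u$-side---but since $I_{B_j}$ is homogeneous precisely because of the configuration weight, these are two sides of the same coin.
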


\begin{proof}
Let $f ={\bf x}^{\bf \alpha} - {\bf x}^{\bf \beta}$. 
Since
$\pi(x_{M})=M
= \prod_{j=1}^d \pi_j \circ \varphi_j (x_{M})$
and each $\pi_j \circ \varphi_j (x_{M})$ belongs to
$K[{\bf u}^{(j)}]$,
we have 
\begin{eqnarray*}
f \in I_{A(B_1,\ldots,B_d) } =\ker(\pi)
&\Longleftrightarrow  &
\pi({\bf x}^{\bf \alpha} )=\pi({\bf x}^{\bf \beta} )\\
&\Longleftrightarrow &
 \prod_{j=1}^d \pi_j \circ \varphi_j ({\bf x}^{\bf \alpha})
=
 \prod_{j=1}^d \pi_j \circ \varphi_j ({\bf x}^{\bf \beta})\\
&\Longleftrightarrow &
\pi_j \circ \varphi_j ({\bf x}^{\bf \alpha})
=
\pi_j \circ \varphi_j ({\bf x}^{\bf \beta})
\ \mbox{ for all } 1 \leq j \leq d\\
&\Longleftrightarrow &
\pi_j \circ \varphi_j (f)
=
0
\ \mbox{ for all } 1 \leq j \leq d\\
&\Longleftrightarrow &
\varphi_j(f) \in \ker (\pi_j)=I_{B_j}
\ \mbox{ for all } 1 \leq j \leq d.
\end{eqnarray*}
Thus (i) $\Longleftrightarrow $ (ii) holds.

Recall that $I_{B_i}$ is homogeneous
for all $1 \leq i \leq d$.
If $M = {m}_{j_1}^{(i_1)} \cdots {m}_{j_r}^{(i_r)}$, then
$$\pi_0 \circ \varphi_0 
(x_{M})=t_{i_1} \cdots t_{i_r}
= \prod_{j=1}^d  t_j^{\deg \left( \varphi_j \left(x_{M}\right) \right)}.$$
Hence we have
\begin{eqnarray*}
& & \varphi_j(f) \in I_{B_j} \ \mbox{ for all } 1 \leq j \leq d\\
&\Longrightarrow &
\deg ( \varphi_j ({\bf x}^{\bf \alpha}) ) = \deg ( \varphi_j ({\bf x}^{\bf \beta}) )
\ \mbox{ for all } 1 \leq j \leq d\\
&\Longrightarrow &
\pi_0 \circ \varphi_0 ({\bf x}^{\bf \alpha})
=
\prod_{j=1}^d  t_j^{\deg ( \varphi_j ({\bf x}^{\bf \alpha}) )}
=
\prod_{j=1}^d  t_j^{\deg ( \varphi_j ({\bf x}^{\bf \beta}) )}
=
\pi_0 \circ \varphi_0 ({\bf x}^{\bf \beta})\\
&\Longrightarrow &
\pi_0 \circ \varphi_0 ( f ) =0\\
&\Longrightarrow &
\varphi_0 ( f ) \in \ker(  \pi_0 )=I_A
\end{eqnarray*}
as desired.
\end{proof}

\begin{Theorem}
\label{main1}
Suppose that the toric ideals $I_A$, $I_{B_1}, \ldots , I_{B_d}$ possess quadratic
Gr\"obner bases ${\cal G}_0$, ${\cal G}_1, \ldots, {\cal G}_d$ 
with respect to $<_0$, $<_1, \ldots, <_d$ respectively.
Then the toric ideal $I_{A(B_1,\ldots,B_d) }$
possesses the reduced
Gr\"obner basis ${\cal G}$ consisting of all binomials of the form
$
x_{M_{\soejia}} 
x_{M_{\soejib}} 
-
x_{M_{\soejic}} 
x_{M_{\soejid}} 
$
where 
$M_{\soejic}= {m}_{j_1}^{(i_1)}{m}_{j_2}^{(i_2)} \cdots {m}_{j_{r}}^{(i_{r})}$,
$M_{\soejid}= {m}_{\ell_1}^{(k_1)} {m}_{\ell_2}^{(k_2)} \cdots {m}_{\ell_{s}}^{(k_{s})}$
with $ \soejic \leq \soejid $
and
\begin{eqnarray}
\varphi_j(x_{M_{\soejia}} 
x_{M_{\soejib}}) &\stackrel{{\cal G}_j}{\longrightarrow } & \varphi_j(x_{M_{\soejic}} 
x_{M_{\soejid}} ) 
\ \ \ \ \ \ \ \ 
\mbox{ for each } j = 0,1,\ldots, d ,\\
i_\lambda  = k_\mu  &\Longrightarrow & j_\lambda  \leq \ell_\mu  
\ \ \ \ \ \ \ \ \ \ \ \ \ \ \ \ \ \ 
\mbox{for } 1 \leq \lambda  \leq r, \ 1 \leq \mu  \leq s.
\end{eqnarray}
The initial monomial
of
$
x_{M_{\soejia}} 
x_{M_{\soejib}} 
-
x_{M_{\soejic}} 
x_{M_{\soejid}} 
$
is 
$
x_{M_{\soejia}}x_{M_{\soejib}} 
$.
\end{Theorem}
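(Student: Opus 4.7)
I would equip $K[{\bf x}]$ with a monomial order $<$ compatible with $<_0,<_1,\ldots,<_d$ and the labeling $(*)$; concretely, the lexicographic order induced by $x_{M_1} > \cdots > x_{M_\eta}$, refined by the item-index lex when ties in $\pi_0\varphi_0$ occur, is a natural choice. The containment ${\cal G}\subseteq I_{A(B_1,\ldots,B_d)}$ follows at once: condition~(1) places $\varphi_j(x_{M_{\alpha}}x_{M_{\beta}})-\varphi_j(x_{M_{\gamma}}x_{M_{\delta}})$ in $I_{B_j}$ (${\cal G}_j$-reduction is an ideal congruence), and then Lemma~\ref{keylemma} applies. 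That $x_{M_{\alpha}}x_{M_{\beta}}$ is the initial monomial comes from condition~(1) at $j=0$ together with $(*)$: the reduction $\varphi_0(x_{M_{\alpha}}x_{M_{\beta}})\stackrel{{\cal G}_0}{\longrightarrow}\varphi_0(x_{M_{\gamma}}x_{M_{\delta}})$ forces the group-content factorization $\{M_{\alpha},M_{\beta}\}$ to be ``less sorted'' than $\{M_{\gamma},M_{\delta}\}$, hence $\alpha<\gamma$ in $(*)$; when the group contents tie, condition~(2) and the refined tie-break yield $(\alpha,\beta)<_{\rm lex}(\gamma,\delta)$ as sorted index tuples.

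The substantive step is to show ${\cal G}$ is a Gr\"obner basis. I plan to do this by showing the \emph{standard monomials} of ${\cal G}$ (those not divisible by any initial monomial $x_{M_{\alpha}}x_{M_{\beta}}$ of an element of ${\cal G}$) project injectively under $\pi$; together with termination of ${\cal G}$-reductions, this yields the Gr\"obner basis property. Let $u=x_{M_{\alpha_1}}\cdots x_{M_{\alpha_r}}$ (sorted: $\alpha_1\leq\cdots\leq\alpha_r$) and $v$ be standard monomials with $\pi(u)=\pi(v)$. The key claim is that $\varphi_l(u)$ is ${\cal G}_l$-standard for every $l\in\{0,1,\ldots,d\}$: any non-standard quadratic sub-monomial of $\varphi_l(u)$ either sits inside a single $\varphi_l(x_{M_{\alpha_p}})$---impossible since standard expressions yield ${\cal G}_l$-standard monomials by definition---or straddles two factors $\alpha_i,\alpha_j$, yielding a non-standard $\varphi_l(x_{M_{\alpha_i}}x_{M_{\alpha_j}})$ and contradicting the pair-standardness built into $u$. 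The same holds for $v$, so Lemma~\ref{keylemma} combined with uniqueness of Gr\"obner normal forms gives $\varphi_l(u)=\varphi_l(v)$ for every $l$.

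Finally I reconstruct $u$ from the tuple $(\varphi_0(u),\varphi_1(u),\ldots,\varphi_d(u))$: $\varphi_0(u)=\varphi_0(v)$ determines the multiset of group-patterns of the factors, and for each $l\geq 1$, $\varphi_l(u)=\varphi_l(v)$ determines the multiset of group-$l$ item indices summed over all factors. The pairwise form of condition~(2), forced on $u$ by standardness (every pair $(\alpha_i,\alpha_j)$ with $i<j$ satisfies condition~(2) with $\gamma=\alpha_i$, $\delta=\alpha_j$), states that every item in $M_{\alpha_i}$ from a group shared with $M_{\alpha_j}$ is $\leq$ every item in $M_{\alpha_j}$ from that group; this ``block-sorting'' uniquely distributes each group's total item multiset to the factors according to their per-factor group counts. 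Hence $u=v$, completing the proof. The main obstacle is the combination of the pairwise-to-global standardness lifting (relying on quadraticity of each ${\cal G}_l$ and on the standard-expression convention baked into the definition of $\varphi_l$) with the uniqueness of reconstruction (relying on the strong block-sorted interpretation of condition~(2) rather than merely an interleaved sort).
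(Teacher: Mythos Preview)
Your overall architecture matches the paper's proof almost exactly: containment ${\cal G}\subseteq I_{A(B_1,\ldots,B_d)}$ via Lemma~\ref{keylemma}; then, assuming a standard binomial $u-v$ in the ideal, show $\varphi_j(u)=\varphi_j(v)$ for all $j$ (using quadraticity of each ${\cal G}_j$ and the standard-expression convention), and finally recover $u=v$ from the block-sorting interpretation of condition~(2). That part is correct and essentially identical to the paper.

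The one genuine gap is your construction of the monomial order. The lexicographic order on $K[{\bf x}]$ induced by $x_{M_1}>\cdots>x_{M_\eta}$, however you refine the ties in $(*)$, does \emph{not} make $x_{M_\alpha}x_{M_\beta}$ the initial monomial in general. Already in Example~\ref{nestexample} the binomial $x_{m_1^{(1)}m_2^{(1)}}^{\,2}-x_{m_1^{(1)}m_1^{(1)}}x_{m_2^{(1)}m_2^{(1)}}$ lies in ${\cal G}$ with prescribed initial term $x_{m_1^{(1)}m_2^{(1)}}^{\,2}$; but with the labeling of $(*)$ these monomials are $x_{M_2}^{2}$ and $x_{M_1}x_{M_3}$, and any lex order with $x_{M_1}$ largest makes $x_{M_1}x_{M_3}$ the initial term instead. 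Here $\varphi_j$ agrees on both monomials for every $j$, so condition~(1) gives no leverage; it is condition~(2) alone that singles out the initial side, and a plain lex order on the $x$-variables cannot detect this.

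The paper sidesteps the issue entirely: it observes that the rewriting rule ``replace $x_{M_\alpha}x_{M_\beta}$ by $x_{M_\gamma}x_{M_\delta}$'' is Noetherian (being built from Gr\"obner reductions modulo ${\cal G}_0,\ldots,{\cal G}_d$ together with the sorting in~(2)), and then invokes \cite[Theorem~3.12]{Stu} to obtain \emph{some} term order compatible with the marking. You should do the same; no explicit order is needed, and your attempted one does not work.
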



\bigskip

\noindent
{\bf Example \ref{nestexample} (continued).}
The reduced Gr\"obner basis ${\cal G}$ in the statement
of Theorem \ref{main1} consists of 105 binomials.
For example, 
\begin{eqnarray*}
x_{ m_2^{(1)} m_2^{(1)}  }
x_{ m_2^{(2)} m_2^{(2)}  }
&-&
{x_{   m_2^{(1)} m_2^{(2)} }}^2
\\
x_{m_1^{(1)} m_1^{(1)}  }
x_{m_3^{(1)} m_2^{(2)} }
&-&
x_{m_1^{(1)} m_2^{(1)}  }
x_{m_2^{(1)} m_2^{(2)} }
\\
x_{ m_2^{(1)} m_2^{(1)}  }
x_{ m_1^{(1)} m_2^{(2)}  }
&-&
x_{ m_1^{(1)} m_2^{(1)}  }
x_{ m_2^{(1)} m_2^{(2)}  }
\\
{x_{   m_1^{(1)} m_2^{(1)} }}^2
&-&
x_{ m_1^{(1)} m_1^{(1)}  }
x_{ m_2^{(1)} m_2^{(1)}  }
\end{eqnarray*}
belong to ${\cal G}$
and the initial monomial is the first monomial for each binomial.

\medskip

\begin{proof}[Proof of Theorem \ref{main1}]
Let
$
x_{M_{\soejia}} 
x_{M_{\soejib}} 
-
x_{M_{\soejic}} 
x_{M_{\soejid}} 
\in {\cal G}$.
Thanks to the condition (1) above, we have
$\varphi_j(x_{M_{\soejia}} 
x_{M_{\soejib}}) - \varphi_j(x_{M_{\soejic}} 
x_{M_{\soejid}} ) \in I_{B_j}$
for all $1 \leq j \leq d$.
By virtue of Lemma \ref{keylemma}, we have $x_{M_{\soejia}} 
x_{M_{\soejib}} 
-
x_{M_{\soejic}} 
x_{M_{\soejid}} 
\in  I_{A(B_1,\ldots,B_d)}$.
Thus ${\cal G}$ is a subset of $I_{A(B_1,\ldots,B_d)}$.

Since the reduction relation modulo Gr\"obner bases are Noetherian,
\cite[Theorem 3.12]{Stu} guarantees that 
there exists a monomial order such that the monomial
$x_{M_{\soejia}} x_{M_{\soejib}} $ is the initial monomial for each 
$
x_{M_{\soejia}} 
x_{M_{\soejib}} 
-
x_{M_{\soejic}} 
x_{M_{\soejid}} 
\in {\cal G}$.

Suppose that there exists a binomial $0 \neq u - v \in 
I_{A(B_1,\ldots,B_d)}$
such that
neither $u$ nor $v$ is divided by the initial monomial of any binomial in ${\cal G}$.
By virtue of Lemma 1,
we have $\varphi_0(u) - \varphi_0(v) \in I_{A}$ and
$\varphi_i(u) - \varphi_i(v) \in I_{B_i}$ for all $1 \leq i \leq d$.
Hence $\varphi_i(u) - \varphi_i(v) \stackrel{{\cal G}_i}{\longrightarrow } 0$ 
for all $0 \leq i \leq d$.
Moreover, since neither $u$ nor $v$ is divided by the initial monomial of any binomial in ${\cal G}$,
we have 
$\varphi_i(u) \stackrel{{\cal G}_i}{\longrightarrow } \varphi_i(u)$ and
$\varphi_i(v) \stackrel{{\cal G}_i}{\longrightarrow } \varphi_i(v)$.
Thus $\varphi_i(u)= \varphi_i(v)$ for all $0 \leq i \leq d$.
By virtue of $\varphi_0(u)= \varphi_0(v)$ and our convention $(*)$,
\begin{eqnarray*}
u&=&
x_{M_{\ell_1}}
x_{M_{\ell_2}}
\cdots
x_{M_{\ell_p}},\\
v&=&
x_{M_{\ell_1'}}
x_{M_{\ell_2'}}
\cdots
x_{M_{\ell_p'}},
\end{eqnarray*}
where $1 \leq \ell_1 \leq \cdots \leq \ell_p \leq \eta$, $1 \leq \ell_1' \leq \cdots \leq \ell_p' \leq \eta$ and
\begin{eqnarray*}
M_{\ell_\xi  }&=&
{m}_{j_{\xi, 1}}^{(i_{\xi, 1})}{m}_{j_{\xi, 2}}^{(i_{\xi, 2})} \cdots {m}_{j_{\xi, r_\xi }}^{(i_{\xi, r_\xi })},
\\
M_{\ell_\xi'}&=&
{m}_{k_{\xi, 1}}^{(i_{\xi, 1})}{m}_{k_{\xi, 2}}^{(i_{\xi, 2})} \cdots {m}_{k_{\xi, r_\xi }}^{(i_{\xi, r_\xi })}.
\end{eqnarray*}
Since $\varphi_j(u)= \varphi_j(v)$ for all $1 \leq j \leq d$,
we have
$$
\prod_{i_{\xi, q} = j , \ 1 \leq \xi \leq p, \  1 \leq q \leq r_\xi}
z_{j_{\xi,q}}^{(i_{\xi, q})}
=
\prod_{i_{\xi, q} = j , \ 1 \leq \xi \leq p, \  1 \leq q \leq r_\xi}
z_{k_{\xi,q}}^{(i_{\xi, q})}.
$$
Thanks to the condition (2),
we have
$$
i_{\xi, q} = i_{\xi', q'} , \ \xi < \xi'
\Longrightarrow 
j_{\xi, q} \leq j_{\xi', q'},
$$
$$
i_{\xi, q} = i_{\xi', q'} , \ \xi < \xi'
\Longrightarrow 
k_{\xi, q} \leq k_{\xi', q'}.
$$
Hence 
$M_{\ell_\xi} = M_{\ell'_\xi}$
for all $1 \leq \xi \leq p$.
Thus we have $u =v$ and this is a contradiction.
\end{proof}

\begin{Example}
{\em
In the definition of a nested configuration, we assumed that
each $B_i$ and $B_j$ have no common variable.
If $B_i$ and $B_j$ have a common variable for some $1 \leq i < j \leq \lambda$,
then Theorem \ref{main1} does not hold in general.
For example,
if $A = \{t_1 t_4 , t_2 t_5, t_3 t_6\}$,
$B_1 = \{ u_1 \}$, $B_2 = \{ u_2 \}$, $B_3 = \{ u_3 \}$,
$B_4 = \{ v_1,v_2 \}$, 
$B_5 = \{ v_2,v_3 \}$ and
$B_6 = \{ v_1,v_3 \}$,
then
$I_{A(B_1,\ldots,B_6)}$ is a principal ideal
generated by a binomial of degree 3.
}
\end{Example}

\section{Nested configurations arising from Segre--Veronese configurations}

A typical class of semigroup rings
whose toric ideal possesses a quadratic initial ideal
is {\it algebras of Segre--Veronese type}
defined in \cite{OH3, aoki}.
Fix integers $\tau \geq 2$ and $n$ and
sets of integers
${\bf b} = \{b_1,\ldots,b_n\}$,
${\bf c} = \{c_1,\ldots,c_n\}$,
${\bf p} = \{p_1,\ldots,p_n\}$ and
${\bf q} = \{q_1,\ldots,q_n\}$
such that 
\begin{enumerate}
\item[(i)]
$0 \leq c_i \leq b_i$
for all $1 \leq i \leq n$;
\item[(ii)]
$1 \leq p_i \leq q_i \leq d$
for all $1 \leq i \leq n$.
\end{enumerate}
Let $\AAA \subset K[t_1,\ldots,t_d]$ denote the set
of all monomials
$\prod_{j=1}^{d} {t_j}^{f_j}$
such that
\begin{enumerate}
\item[(i)]
$\sum_{j=1}^{d} f_j =\tau $.
\item[(ii)]
$c_i \leq \sum_{j=p_i}^{q_i} f_j \leq  b_i$
for all $1 \leq i \leq n$.
\end{enumerate}
Then the affine semigroup ring $K[\AAA]$
is called an
{\it algebra of Segre--Veronese type}.

Several
popular classes of semigroup rings are
algebras of Segre--Veronese type.
If $n=2$, $\tau=2$, $b_1= b_2=c_1=c_2=1$,
$p_1 = 1$, $p_2 = q_1+1$ and $q_2 =d$, then
the affine semigroup ring
$K[\AAA]$
is
the Segre product of polynomial rings
$K[t_1,\ldots,t_{q_1}]$
and
$K[t_{q_1+1},\ldots,t_d]$.
On the other hand,
if $n=d$, $p_i=q_i=i$, $b_i=\tau$ and $c_i = 0$
for all $1 \leq i \leq n$, then the affine semigroup ring
$K[\AAA]$
is the classical
$\tau$th Veronese subring of the polynomial ring
$K[t_1,\ldots,t_d]$.
Moreover, 
if $n=d$, $p_i=q_i=i$, $b_i=1$ and $c_i = 0$
for all $1 \leq i \leq n$, then the affine semigroup ring
$K[\AAA]$
is the $\tau$th 
squarefree Veronese subring of the polynomial ring
$K[t_1,\ldots,t_d]$.
In addition, algebras of Veronese type 
(i.e., $n=d$, $p_i=q_i=i$ and $c_i = 0$
for all $1 \leq i \leq n$)
are 
studied in \cite{NeHi}
and \cite{Stu}.

Let $K[X]$ denote the polynomial ring with 
the set of variables 
$$
\left\{ x_{j_1 j_2 \cdots j_\tau}
\ \left| \  1\leq j_1 \leq  j_2 \leq \cdots \leq j_\tau \leq d, \ \prod_{k=1}^\tau t_{j_k} 
\in \AAA
\right. \right\}.
$$
The toric ideal $I_{\AAA}$
is the kernel of the surjective homomorphism
$
\pi : K[X]
\longrightarrow 
K[\AAA]
$
defined by
$
\pi(x_{j_1 j_2 \cdots j_\tau})
=
\prod_{k=1}^\tau t_{j_k}
$.
A monomial
$
x_{\ell_1 \ell_2 \cdots \ell_\tau}
x_{m_1 m_2 \cdots m_\tau}
\cdots
x_{n_1 n_2 \cdots n_\tau}
$
is called {\it sorted} if we have
$$
\ell_1 \leq m_1 \leq \cdots \leq n_1 \leq 
\ell_2 \leq m_2 \leq \cdots \leq n_2 \leq \cdots \leq 
\ell_\tau \leq m_\tau \leq \cdots \leq n_\tau
.$$
Let ${\rm sort}(\cdot)$ denote
the operator which takes any string over
the alphabet $\{1,2,\ldots,d\}$
and sorts it into weakly increasing order.

The squarefree quadratic Gr\"obner basis of 
the toric
ideal $I_\AAA$ is given as follows.

\begin{Theorem}[\cite{Stu, OH3, aoki}]
\label{sortingGB}
Work with the same notation as above.
Let ${\cal G}$ be the set of all binomials 
$$
x_{\ell_1 \ell_2 \cdots \ell_\tau}
x_{m_1 m_2 \cdots m_\tau}
-
x_{n_1 n_3 \cdots n_{2\tau-1}}
x_{n_2 n_4 \cdots n_{2\tau}}
$$
where
$$
{\rm sort}(
{\ell_1 m_1 \ell_2 m_2 \cdots \ell_\tau m_\tau}
)
=
n_1 n_2 \cdots n_{2\tau}.
$$
Then there exists a monomial order on $K[X]$
such that ${\cal G}$
is the reduced Gr\"obner basis of
the toric ideal $I_\AAA$.
The initial ideal is generated by 
squarefree quadratic (nonsorted) monomials.
\end{Theorem}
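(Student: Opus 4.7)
My strategy is to adapt the classical sorting argument of Sturmfels on the $\tau$-th Veronese to the Segre--Veronese setting; the essential extra step is verifying that sorting preserves the balance conditions cutting out $\AAA$. I would proceed in three stages.

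First, I would check $\mathcal{G} \subset I_{\AAA}$. Both terms of every candidate binomial $x_{M_1}x_{M_2} - x_{N_1}x_{N_2}$ map under $\pi$ to the same monomial in $K[t_1,\ldots,t_d]$, since the multisets of indices agree. The nontrivial point is that $N_1$ and $N_2$ still lie in $\AAA$. For this, fix an index $i$ and let $k$ denote the number of entries of $M_1 M_2$ lying in $[p_i,q_i]$; by hypothesis $k \in [2c_i, 2b_i]$. Since $N_1$ and $N_2$ pick alternate positions from the sorted interleaving, they receive $\lceil k/2 \rceil$ and $\lfloor k/2 \rfloor$ of these entries, both of which lie in $[c_i, b_i]$. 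Hence $N_1, N_2 \in \AAA$, the binomial is well defined, and it belongs to $\ker \pi = I_{\AAA}$.

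Second, I would show that distinct \emph{sorted} monomials (those not divisible by the leading term of any binomial in $\mathcal{G}$) have distinct $\pi$-images. Arranging the $p$ indexed letters of a sorted monomial as the rows of a $p \times \tau$ matrix, reordered so that the first column is weakly increasing, the pairwise-sorted conditions force each column to be weakly increasing and the last entry of each column to be bounded above by the first entry of the next; in other words, reading the matrix column by column produces a single weakly increasing string. This string, and hence the monomial, is determined by the $\pi$-image (which records only the multiset of letters). Consequently no nonzero binomial in $I_{\AAA}$ is supported on sorted monomials alone, so every element of $I_{\AAA}$ reduces to zero modulo $\mathcal{G}$, and $\mathcal{G}$ is a Gr\"obner basis.

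Third, I must exhibit a monomial order realizing the prescribed initial terms. The sorting rewriting is terminating --- for instance, the number of inversions in the column-wise index string strictly decreases at each replacement --- so by \cite[Theorem 3.12]{Stu} there is a monomial order on $K[X]$ under which every element of $\mathcal{G}$ has its unsorted side as leading term. These leading terms are squarefree and quadratic by construction, and the trailing terms are sorted, mutually distinct, and not divisible by any leading term, so $\mathcal{G}$ is already reduced. The main technical obstacle is the combinatorial lemma in the first stage, namely that sorting respects all $n$ Segre--Veronese constraints simultaneously; once this is in hand, the rest is Sturmfels' original Veronese argument.
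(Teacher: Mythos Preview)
Your argument is correct and is precisely the standard sorting proof; the paper itself does not prove this theorem at all but quotes it from \cite{Stu, OH3, aoki}, so there is nothing to compare against in the present paper. Your three stages---closure of $\AAA$ under sorting, uniqueness of the sorted representative in each $\pi$-fibre, and termination via \cite[Theorem~3.12]{Stu}---match exactly the line of reasoning in those references, and your observation that the entries of the sorted string lying in an interval $[p_i,q_i]$ form a contiguous block (hence split $\lceil k/2\rceil$/$\lfloor k/2\rfloor$ between $N_1$ and $N_2$) is the right way to handle the extra Segre--Veronese constraints.
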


By virtue of Theorem \ref{main1},
if all of $A, B_1, \ldots , B_d$
are arising from Segre--Veronese configurations,
then the toric ideal of
the nested configuration
$ A(B_1,\ldots,B_d) $ 
possesses a quadratic 
Gr\"obner basis.
Although the following Gr\"obner basis is different 
from that Theorem \ref{main1} guarantee,
the proof is similar.

\begin{Theorem}
\label{main2}
If $K[A]$ is an algebra of Segre--Veronese type,
and if the toric ideals $I_{B_1}, \ldots , I_{B_d}$ possess 
the reduced 
Gr\"obner basis ${\cal G}_i$,
then the toric ideal $I_{A(B_1,\ldots,B_d) }$
possesses a 
quadratic 
Gr\"obner basis ${\cal G}$ consisting of all binomial of the form
$${\bf x}^\alpha -{\bf x}^\beta= 
x_{{m}_{j_1}^{(i_1)}{m}_{j_3}^{(i_3)} \cdots {m}_{j_{2r-1}}^{(i_{2r-1})}} 
x_{{m}_{j_2}^{(i_2)}{m}_{j_4}^{(i_4)} \cdots {m}_{j_{2r}}^{(i_{2r})}} 
-
x_{{m}_{\ell_1}^{(k_1)} {m}_{\ell_3}^{(k_3)} \cdots {m}_{\ell_{2r-1}}^{(k_{2r-1})}} 
x_{{m}_{\ell_2}^{(k_2)} {m}_{\ell_4}^{(k_4)} \cdots {m}_{\ell_{2r}}^{(k_{2r})}} 
$$
where ${\bf x}^\alpha$ is the initial monomial and
\begin{eqnarray}
k_1 k_2 \cdots k_{2r} &=& {\rm sort} (i_1 i_2 \cdots i_{2r} ),\\
\varphi_j({\bf x}^\alpha) &\stackrel{{\cal G}_j}{\longrightarrow } & \varphi_j({\bf x}^\beta) 
\ \ \ 
\mbox{ for each } 1 \leq j \leq d,\\
k_i = k_{i+1} &\Longrightarrow & \ell_i \leq \ell_{i+1} \ \ \ (1 \leq i \leq 2r -1).
\end{eqnarray}
Moreover, if the initial ideal of $I_{B_i}$ is squarefree for all $i$,
then the initial ideal of $I_{A(B_1,\ldots,B_d) }$ is squarefree.
\end{Theorem}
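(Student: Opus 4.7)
The plan is to imitate the proof of Theorem \ref{main1}, but replace the abstract quadratic Gr\"obner basis of $I_A$ by the explicit sorting basis supplied by Theorem \ref{sortingGB}. First I would verify that ${\cal G} \subseteq I_{A(B_1,\ldots,B_d)}$: condition (4) gives $\varphi_j({\bf x}^\alpha - {\bf x}^\beta) \in I_{B_j}$ for every $1 \leq j \leq d$, and Lemma \ref{keylemma} then places each binomial of ${\cal G}$ in $I_{A(B_1,\ldots,B_d)}$. Next, as in Theorem \ref{main1}, the Noetherian property of the reduction relation together with \cite[Theorem 3.12]{Stu} produces a monomial order making ${\bf x}^\alpha$ the initial monomial of each binomial in ${\cal G}$.

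The core of the proof is the standard contradiction argument. Assume $0 \neq u - v \in I_{A(B_1,\ldots,B_d)}$ is such that neither $u$ nor $v$ is divisible by any initial monomial of ${\cal G}$. By Lemma \ref{keylemma}, $\varphi_j(u) - \varphi_j(v) \in I_{B_j}$ for $1 \leq j \leq d$ and $\varphi_0(u) - \varphi_0(v) \in I_A$. The assumption on $u$ and $v$ combined with condition (4) forces $\varphi_j(u)$ and $\varphi_j(v)$ to be in ${\cal G}_j$-normal form, whence $\varphi_j(u) = \varphi_j(v)$ for every $1 \leq j \leq d$. For $j = 0$, a similar argument applies: were the interleaved superscript sequence of $u$ (or $v$) not sorted, conditions (3) and (4) would yield an element of ${\cal G}$ whose initial monomial divides $u$ (or $v$), contradicting our hypothesis. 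Hence both interleaved superscript sequences are already sorted, and by Theorem \ref{sortingGB} applied to $\varphi_0(u) - \varphi_0(v) \in I_A$ they must coincide.

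It remains to deduce $u = v$ as factored monomials. Since $u$ and $v$ share the same sorted superscript sequence and have identical $\varphi_j$-images, for every fixed $j$ the multisets of subscripts attached to the superscript $j$ in $u$ and in $v$ agree; condition (5) then pins down the assignment of those subscripts to their positions, matching the two $x$-variables of $u$ with those of $v$ and contradicting $u \neq v$. Finally, the squarefreeness statement reduces to showing that no binomial $x_M^2 - x_{M'}x_{M''} \in {\cal G}$ can exist: indeed, if each $\mathrm{in}({\cal G}_j)$ is squarefree, then $\varphi_j(x_M)$ is already ${\cal G}_j$-normal (by construction of the standard expression), so $\varphi_j(x_M)^2$ is also in normal form, and condition (4) collapses ${\bf x}^\beta$ onto ${\bf x}^\alpha$. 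The main obstacle I expect is the careful bookkeeping at the last step, verifying that conditions (3), (4), (5) jointly and unambiguously reconstruct $u$ from the tuple $(\varphi_0(u), \varphi_1(u), \ldots, \varphi_d(u))$, playing the role that condition (2) played in Theorem \ref{main1}.
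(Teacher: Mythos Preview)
Your proposal is correct and follows essentially the same route as the paper's proof: Lemma~\ref{keylemma} for containment, the Noetherian reduction argument for the existence of a compatible term order, the standard contradiction showing that a standard monomial $u$ is uniquely reconstructed from $(\varphi_0(u),\varphi_1(u),\ldots,\varphi_d(u))$ via conditions (3)--(5), and the squarefreeness check by observing that $\varphi_j(x_M)^2$ is already ${\cal G}_j$-normal when $\mathrm{in}_{<_j}(I_{B_j})$ is squarefree. The only point to make explicit when you write it up is that the step ``condition (4) forces $\varphi_j(u)$ to be in ${\cal G}_j$-normal form'' uses that ${\cal G}_j$ is \emph{quadratic} (so that pairwise normality of the factors of $u$ implies global normality of $\varphi_j(u)$); the paper invokes this explicitly, and without it the argument would not go through.
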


\begin{proof}
By virtue of Lemma \ref{keylemma}, ${\cal G}$ is a subset of $I_{A(B_1,\ldots,B_d)}$.
Since both the sorting operation and the reduction relation modulo Gr\"obner bases are Noetherian,
there exists a monomial order such that the first monomial ${\bf x}^\alpha$ is the initial monomial.

Suppose that there exists a binomial $0 \neq u - v \in 
I_{A(B_1,\ldots,B_d)}$
such that
neither $u$ nor $v$ is divided by any initial monomial of ${\cal G}$.
Let
\begin{eqnarray*}
u&=&
x_{{m}_{j_1}^{(i_1)}{m}_{j_{p+1}}^{(i_{p+1})} \cdots {m}_{j_{(r-1)p+1}}^{(i_{(r-1)p+1})}} 
x_{{m}_{j_2}^{(i_2)}{m}_{j_{p+2}}^{(i_{p+2})} \cdots {m}_{j_{(r-1)p+2}}^{(i_{(r-1)p+2})}} 
\cdots
x_{{m}_{j_p}^{(i_p)}{m}_{j_{2p}}^{(i_{2p})} \cdots {m}_{j_{r p},}^{(i_{r p})}} \\
v&=&
x_{{m}_{\ell_1}^{(k_1)}{m}_{\ell_{p+1}}^{(k_{p+1})} \cdots {m}_{\ell_{(r-1)p+1}}^{(k_{(r-1)p+1})}} 
x_{{m}_{\ell_2}^{(k_2)}{m}_{\ell_{p+2}}^{(k_{p+2})} \cdots {m}_{\ell_{(r-1)p+2}}^{(k_{(r-1)p+2})}} 
\cdots
x_{{m}_{\ell_p}^{(k_p)}{m}_{\ell_{2p}}^{(k_{2p})} \cdots {m}_{\ell_{r p}.}^{(k_{r p})}}
\end{eqnarray*}

By virtue of Lemma \ref{keylemma},
we have
$$
{\rm sort} (i_1 i_2 \cdots i_{r p} ) 
=
{\rm sort} (k_1 k_2 \cdots k_{r p}) 
.$$
Thanks to the condition (3), we have
$$
i_1 i_2 \cdots i_{r p}
=
{\rm sort} (i_1 i_2 \cdots i_{r p} ) 
=
{\rm sort} (k_1 k_2 \cdots k_{r p}) 
=
k_1 k_2 \cdots k_{r p}.
$$
Hence $i_q = k_q$ for all $1 \leq q \leq r p$.
By virtue of Lemma \ref{keylemma},
we have
$
\varphi_j(u) - \varphi_j(v) \in I_{B_j}
$
for each
$1 \leq i \leq d$.
Since each ${\cal G}_i$ consists of quadratic binomials and thanks to the condition (4),
$
\varphi_j(u) = \varphi_j(v) 
$
for each
$1 \leq i \leq d$.
Hence 
\begin{equation}
\prod_{i_q = j , \ \  1 \leq q \leq r p} z_{j_q}^{(i_q)}
=
\prod_{k_q = j , \ \  1 \leq q \leq r p} z_{\ell_q}^{(k_q)}
.
\end{equation}
Thanks to the condition (5) together with (6) above,
$j_q = \ell_q$ for all $1 \leq q \leq r p$.
Thus we have $u = v$ and this is a contradiction.

Suppose that
the initial ideal of $I_{B_i}$ is squarefree for all $i$ and
that
${x_{{m}_{j_1}^{(i_1)}{m}_{j_2}^{(i_2)} \cdots {m}_{j_{r}}^{(i_{r})}}}^2$
belongs to 
the initial ideal of $I_{A(B_1,\ldots,B_d) }$.
Then
$$
g = 
{x_{{m}_{j_1}^{(i_1)}{m}_{j_2}^{(i_2)} \cdots {m}_{j_{r}}^{(i_{r})}}}^2
-
x_{{m}_{\ell_1}^{(k_1)} {m}_{\ell_3}^{(k_3)} \cdots {m}_{\ell_{2r-1}}^{(k_{2r-1})}} 
x_{{m}_{\ell_2}^{(k_2)} {m}_{\ell_4}^{(k_4)} \cdots {m}_{\ell_{2r}}^{(k_{2r})}} 
$$
belongs to ${\cal G}$.
Thanks to the condition (3),
we have 
$$
g = 
{x_{{m}_{j_1}^{(i_1)}{m}_{j_2}^{(i_2)} \cdots {m}_{j_{r}}^{(i_{r})}}}^2
-
x_{{m}_{\ell_1}^{(i_1)} {m}_{\ell_3}^{(i_2)} \cdots {m}_{\ell_{2r-1}}^{(i_r)}} 
x_{{m}_{\ell_2}^{(i_1)} {m}_{\ell_4}^{(i_2)} \cdots {m}_{\ell_{2r}}^{(i_r)}}.
$$
Since $g \neq 0$, 
there exists $k$ such that $j_k \neq \ell_{2k}$.
Thanks to the condition (5),
we have
$$
\varphi_k
\left(
{x_{{m}_{j_1}^{(i_1)}{m}_{j_2}^{(i_2)} \cdots {m}_{j_{r}}^{(i_{r})}}}^2
\right)
\neq
\varphi_k
\left(
x_{{m}_{\ell_1}^{(i_1)} {m}_{\ell_3}^{(i_2)} \cdots {m}_{\ell_{2r-1}}^{(i_r)}} 
x_{{m}_{\ell_2}^{(i_1)} {m}_{\ell_4}^{(i_2)} \cdots {m}_{\ell_{2r}}^{(i_r)}}
\right)
.$$
Hence by the condition (4),
$$
\varphi_k
\left(
{x_{{m}_{j_1}^{(i_1)}{m}_{j_2}^{(i_2)} \cdots {m}_{j_{r}}^{(i_{r})}}}^2
\right)
=
\varphi_k
\left(
x_{{m}_{j_1}^{(i_1)}{m}_{j_2}^{(i_2)} \cdots {m}_{j_{r}}^{(i_{r})}}
\right)^2
$$
belongs to the initial ideal of $I_{B_k}$.
Since the initial ideal of $I_{B_k}$ is squarefree,
$$\varphi_k
\left(
x_{{m}_{j_1}^{(i_1)}{m}_{j_2}^{(i_2)} \cdots {m}_{j_{r}}^{(i_{r})}}
\right)
$$
belongs to the initial ideal of $I_{B_k}$.
This contradicts 
that $\varphi_k$ 
is defined with a standard expression.
Thus the initial ideal of $I_{A(B_1,\ldots,B_d) }$ is squarefree.
\end{proof}

\begin{Example}
{\em
Let $A = \{ t_1^2 \}$
and $B_1 = \{u_1  , u_2  , u_3 \}$.
Then 
$$A(B_1) = 
\{
u_1^2, u_1 u_2, u_1 u_3, u_2^2,u_2 u_3,u_3^2 
\}.$$
The reduced Gr\"obner basis in Theorem \ref{main1} consists of the binomials
\begin{eqnarray*}
x_{u_1 u_2}^2 &-& x_{u_1^2} x_{u_2^2}\\
x_{u_1 u_3}^2 &-& x_{u_1^2} x_{u_3^2}\\
x_{u_2 u_3}^2 &-& x_{u_2^2} x_{u_3^2}\\
 x_{u_1 u_2} x_{u_1 u_3} &-&  x_{u_1^2} x_{u_2 u_3}\\
x_{u_1 u_3} x_{u_2^2} &-& x_{u_1 u_2} x_{u_2 u_3}\\
x_{u_1 u_3} x_{u_2 u_3} &-& x_{u_1 u_2} x_{u_3^2}
\end{eqnarray*}
and the reduced Gr\"obner basis in Theorem \ref{main2} consists of the binomials
\begin{eqnarray*}
x_{u_1^2} x_{u_2^2} &-& x_{u_1 u_2}^2\\
x_{u_1^2} x_{u_3^2} &-& x_{u_1 u_3}^2\\
x_{u_2^2} x_{u_3^2} &-& x_{u_2 u_3}^2\\
 x_{u_1^2} x_{u_2 u_3} &-&  x_{u_1 u_2} x_{u_1 u_3}\\
x_{u_1 u_3} x_{u_2^2} &-& x_{u_1 u_2} x_{u_2 u_3}\\
 x_{u_1 u_2} x_{u_3^2} &-& x_{u_1 u_3} x_{u_2 u_3}
\end{eqnarray*}
where the initial monomial of each binomial is the first monomial.
}
\end{Example}

\noindent
Satoshi Aoki\\
Department of Mathematics and Computer Science,
Kagoshima University.\\
{\tt aoki@sci.kagoshima-u.ac.jp} \medskip\\ 
Takayuki Hibi\\
Graduate School of Information Science and Technology,
Osaka University.\\
{\tt hibi@math.sci.osaka-u.ac.jp} \medskip\\
Hidefumi Ohsugi\\
Department of Mathematics,
Rikkyo University.\\
{\tt ohsugi@rkmath.rikkyo.ac.jp} \medskip\\
Akimichi Takemura\\
Graduate School of Information Science and Technology,
University of Tokyo.\\
{\tt takemura@stat.t.u-tokyo.ac.jp}


\begin{thebibliography}{AHOT}


\bibitem[AHOT]{aoki}
S. Aoki, T. Hibi, H. Ohsugi and A. Takemura,
Markov basis and Gr\"obner basis of
Segre--Veronese configuration
for testing independence in group-wise selections,
preprint 2007.


\bibitem[DeHi]{NeHi}
E. De Negri and T. Hibi,
Gorenstein algebras of Veronese type,
{\it J. Algebra} {\bf 193} (1997), no. 2, 629 -- 639. 

\bibitem [OH1]{OH2}
H. Ohsugi and T. Hibi,
Toric ideals generated by quadratic binomials,
\textit{J. Algebra} \textbf{218} (1999), 509--527. 


\bibitem [OH2]{OH3}
H. Ohsugi and T. Hibi,
Compressed polytopes, initial ideals and complete multipartite graphs,
\textit{Illinois J. Math.} \textbf{44} (2000), 391--406. 

\bibitem[Stu]{Stu}
B. Sturemfels, ``Gr\"obner bases and convex polytopes,"
Amer. Math. Soc., Providence, RI, 1995.



\end{thebibliography}
\end{document}